
\documentclass[journal]{IEEEtran}
%\documentclass[12pt,draftcls,onecolumn]{IEEEtran}
%
% If IEEEtran.cls has not been installed into the LaTeX system files,
% manually specify the path to it like:
% \documentclass[journal]{../sty/IEEEtran}

\usepackage{graphics} % for pdf, bitmapped graphics files
\usepackage{epsfig} % for postscript graphics files
\usepackage{amsmath} % assumes amsmath package installed
\usepackage{amsthm}
\usepackage{amssymb}  % assumes amsmath package installed

\usepackage{mathrsfs}

\usepackage{subfigure}
\usepackage{color} 
\usepackage{cite}
\usepackage{mathtools}
\usepackage{mathrsfs}
\usepackage{txfonts}
\usepackage{amssymb}
\usepackage{multirow}
\usepackage{subfigure}

\newtheorem{assumption}{Assumption}[section]

\newcommand{\HEI}[1]{\bf Hybrid-EI}

\newtheorem{theorem}{Theorem}%[section]
%[theorem]
%[section]
\newtheorem{remark}{Remark}%[section]
%[section]
\newtheorem{definition}{Definition}

% Margins

% *** GRAPHICS RELATED PACKAGES ***
%
\ifCLASSINFOpdf

\else

\fi

\parskip = 0.4ex

% correct bad hyphenation here
%\hyphenation{op-tical net-works semi-conduc-tor}

\begin{document}
%
% paper title
% Titles are generally capitalized except for words such as a, an, and, as,
% at, but, by, for, in, nor, of, on, or, the, to and up, which are usually
% not capitalized unless they are the first or last word of the title.
% Linebreaks \\ can be used within to get better formatting as desired.
% Do not put math or special symbols in the title.
%\title{Hybrid Impulsive and Event-Triggered Control of Time-Delay Systems}
\title{\LARGE \textbf{Event-Triggered Impulsive Control for Nonlinear Systems\\ with Actuation Delays}}
%\title{{Event-Triggered Impulsive Control for Nonlinear Systems with Actuation Delays}}
%
%
% author names and IEEE memberships
% note positions of commas and nonbreaking spaces ( ~ ) LaTeX will not break
% a structure at a ~ so this keeps an author's name from being broken across
% two lines.
% use \thanks{} to gain access to the first footnote area
% a separate \thanks must be used for each paragraph as LaTeX2e's \thanks
% was not built to handle multiple paragraphs
%

\author{Kexue~Zhang~~~~%~~\IEEEmembership{Member,~IEEE,}
        %~Bahman~Gharesifard~~~~%~\IEEEmembership{Member,~IEEE}
        ~Elena~Braverman%,~\IEEEmembership{Life~Fellow,~IEEE}% <-this % stops a space
\thanks{This work was supported by the Natural Sciences and Engineering Research Council of Canada (NSERC), the grant RGPIN-2020-03934. The first author was partially supported by a fellowship from the Pacific Institute of the Mathematical Sciences (PIMS), Canada.}
\thanks{K. Zhang is with the Department
of Mathematics and Statistics, Queen's University, Kingston, Ontario K7L 3N6, Canada (e-mail: kexue.zhang@queensu.ca).}
\thanks{E. Braverman is with the Department
of Mathematics and Statistics, University of Calgary, Calgary, Alberta T2N 1N4, Canada (e-mail: maelena@ucalgary.ca).}
 }

% make the title area
\maketitle
%
%% As a general rule, do not put math, special symbols or citations
%% in the abstract or keywords.
\begin{abstract}
This paper studies impulsive stabilization of nonlinear systems. We propose two types of event-triggering algorithms to update the impulsive control signals with actuation delays. The first algorithm is based on continuous event detection, while the second type makes decision about updating the impulsive control inputs according to periodic event detection. Sufficient conditions are derived to ensure asymptotic stability of the impulsive control systems with the designed event-triggering algorithms. Lower bounds of the time period between two consecutive events are also obtained, so that the closed-loop impulsive systems are free of Zeno behavior. That is to say that the pulse phenomena are excluded from the event-triggered impulsive control systems, in the community of impulsive differential equations. An illustrative example demonstrates effectiveness of the proposed algorithms and our theoretical results.

\end{abstract}
%
%% Note that keywords are not normally used for peerreview papers.
\begin{IEEEkeywords}
Event-triggered impulsive control, Zeno behavior, nonlinear system, actuation delay
\end{IEEEkeywords}

\section{Introduction}\label{Sec1}

\IEEEPARstart{I}{mpulsive} control is a control paradigm that uses impulses that are state abrupt changes over negligible time periods to control dynamical systems. It has been proved to be powerful in various control problems, such as consensus of multi-agent systems, secure communications, and pulse vaccination strategy in epidemic diseases (see, e.g., \cite{TY:2001,XL-KZ:2019} and the references therein). Most of the existing results on impulsive control problems focus on time-triggered control strategies. More specifically, the moments when the impulses happen, normally called \textit{impulse times}, are pre-scheduled which makes time-triggered control strategies simple to implement. Nevertheless, unnecessary impulsive control tasks could be executed which clearly is a waste of control efforts and communication resources. 

To improve the impulsive control efficiency, event-triggered impulsive control has been successfully developed recently, the idea of which is to determine the impulse times or the instants of updating the control signals by a certain event that occurs only when the system dynamics violates a well-designed triggering condition. Compared with time-triggered impulsive control, the impulse times are implicitly determined by the triggering condition, and the impulsive controller is only activated when the event-triggering condition is satisfied. The main difference between event-triggered impulsive control and conventional event-triggered control is as follows. The event-triggered controllers work in a sample-and-hold fashion, that is, the control signal is updated at each event time and remains unchanged until the next event is triggered (see, e.g., \cite{PT:2007,WH-KHJ-PT:2012}). The event-triggered impulsive control signals are instantaneous control inputs over negligible periods, and the control system is uncontrolled between two consecutive impulse times. Thus, event-triggered impulsive control is capable of significantly reducing the time on the execution of control tasks.

The past few years have witnessed a surge of interest in the study of impulsive control systems with event-triggered impulse times. In \cite{BL-DJH-ZS:2018}, input-to-state impulsive stabilization of nonlinear systems was studied with the impulse times determined by an event-triggering algorithm with three levels of events which makes the event-triggered impulsive controllers complicated to implement. The prescribed upper bound of the \textit{inter-event times}, which are the periods lying between two consecutive event times, potentially requires the impulse signals to occur more frequently, which could possibly trigger some unnecessary control tasks. The same control problem was also investigated in \cite{MG-ZA-LP:2019}, and the designed event-triggering scheme requires the inter-event times belong to a predetermined set of periods which dramatically limits the selection of the event times. Uniform stability and global asymptotic stability of impulsive systems were studied in \cite{XL-DP-JC:2020} with event-triggered impulse times.  
An extra forced sequence of impulse times is required to ensure asymptotic stability of the control systems which makes the proposed event-triggering algorithm complicated to implement. Such event-triggering method was then applied to the synchronization problem of dynamical networks in \cite{DP-XL:2020}. 

Time delays are ubiquitous in nature and exist widely in many practical systems (see \cite{EF:2014,JKH:1977}). Event-triggered impulsive control for time-delay systems has attracted an increasing interest in the community of control theory and engineering. To our best knowledge, exponential stabilization of general nonlinear systems with time delay was initially studied in \cite{XL-XY-JC:2020} by the method of event-triggered impulsive control, and then the proposed event-triggering algorithm was extended to investigate input-to-state stability of nonlinear systems in~\cite{XL-PL:2021}. However, no time-delay effects are considered with the impulsive controllers in these studies. The event-triggering algorithm in \cite{XL-XY-JC:2020} was also generalized to synchronize complex dynamical networks with coupling delays in \cite{XL-JC-XL-MAA-UAAJ:2020}, and impulsive controllers were designed with sensing delays which are communication delays in the controller-sensor pair. Besides the above-mentioned results, there are many interesting event-triggering algorithms designed for some particular impulsive control problems (see, e.g., \cite{XT-JC-XL:2019,MSA-RV-OMK:2019,WD-SYSL-TY-AVV:2017}). It should be noted that all event times coincide with the impulse times in the existing literature, that is, no communication delays are considered in the controller-actuator pair. Such delays are known as \textit{actuation delays}. Moreover, the event detection is based on a continuous sampling of the system states in~\cite{XL-XY-JC:2020,XL-PL:2021}. Motivated by the previous discussion, we consider the event-triggered impulsive control problem of nonlinear systems with actuation delays and construct event-triggering schemes based on continuous and periodic event detection, respectively.

%\textit{Statement of Contributions.} 
%These algorithms work as follows. When the Lyapunov function of the system reaches a pre-designed threshold, an event is triggered to update the impulsive control signal, and the impulse happens at a later time afterward. The first algorithm is based on continuous event detection, and the event times are exactly the moments when the Lyapunov function reaches the threshold. Instead of continuous state sampling, the second algorithm only requires sampling the system states periodically. The event times are most likely not the instants when the curve of the Lyapunov function intersects with the threshold line, because such instants may not be the state sampling moments. Periodic event detection is more practical when a digital device cannot check the event-triggering condition continuously. Sufficient conditions are derived to guarantee asymptotic stability of the event-triggered impulsive control systems, and the lower bounds of inter-event times are derived to exclude \textit{Zeno behavior} (i.e., an infinite number of control updates occur in a finite time interval), for these two types of event-triggering algorithms, respectively. 

In this paper, we focus on the impulsive control problem of nonlinear autonomous systems with actuation delays. Due to the existence of actuation delays, the moments when the impulsive control signals are updated are different from the impulse times. We design two types of event-triggering algorithms to determine the control updating times which are also called \textit{event times}. Our event-triggering algorithms have the following merits: (a) actuation delays are considered with the event-triggered impulsive controllers; (b) some of our event-triggering algorithms are based on periodic state sampling; 
(c) compared with the existing results (see, e.g., \cite{BL-DJH-ZS:2018,MG-ZA-LP:2019,XL-DP-JC:2020,DP-XL:2020,XL-XY-JC:2020,XL-JC-XL-MAA-UAAJ:2020}), our event-triggering algorithms do not need the memory of the system information at the previous event time to determine the coming event which makes the proposed algorithms easy to implement. 

%The rest of this paper is organized as follows. In Section~\ref{Sec2}, we formulate the control problem and provide some preliminaries. Section~\ref{Sec3} introduces an event-triggering algorithm based on continuous event detection, and then Section~\ref{Sec4} proposes a type of event-triggering schemes depending on periodic event detection. An illustrative example is studied in Section~\ref{Sec5} to demonstrate effectiveness of the designed even-triggering algorithms. Finally, Section~\ref{Sec6} concludes this study.

\textit{Notation}. Let $\mathbb{R}$ denote the set of real numbers, $\mathbb{R}^+$ the set of non-negative real numbers, $\mathbb{N}$ the set of positive integers. Let $\mathbb{R}^{n}$ and $\mathbb{R}^{n\times n}$ denote $n-$dimensional and $n\times n-$ dimensional real spaces equipped with the Euclidean norm and the induced matrix norm, respectively, both represented by $\|\cdot\|$. Denote by $\mathcal{V}$ the set of locally Lipschitz continuous functions mapping $\mathbb{R}^n$ to $\mathbb{R}^+$. A continuous function $\alpha:\mathbb{R}^+\rightarrow \mathbb{R}^+$ is said to be of class $\mathcal{K}_{\infty}$ if $\alpha$ is strictly increasing, equals zero at zero, and satisfies $\alpha(s)\rightarrow \infty$ as $s\rightarrow \infty$. We denote by $\alpha^{-1}$ the inverse function of function $\alpha\in\mathcal{K}_{\infty}$.

\section{Preliminaries}\label{Sec2}

Consider the following impulsive control system:
\begin{eqnarray}\label{sys}
\left\{\begin{array}{ll}
\dot{x}(t)=f(x(t)), ~~t\not=t_k+\tau,\cr
\Delta x(t_k+\tau)=g(x(t_k)),~~k\in\mathbb{N},\cr
x(0)=x_0,
\end{array}\right.
\end{eqnarray}
where $x(t)\in\mathbb{R}^n$ is the system state; $x_0\in\mathbb{R}^n$ represents the initial state; $f$ is a continuous function mapping $\mathbb{R}^n$ to $\mathbb{R}^n$ and satisfying $f(0)=0$; function $g: \mathbb{R}^n \mapsto \mathbb{R}^n$ satisfies $g(0)=0$.  Hence, system~\eqref{sys} admits the trivial solution. The time sequence $\{t_k\}_{k\in\mathbb{N}}$ is to be determined based on the occurrence of certain events to be defined later. The impulsive control input $g(x)$ is updated at each \textit{event time} $t_k$. However, due to the existence of the actuation delay {$\tau\in\mathbb{R}^+$}, the impulsive control task is executed at time $t_k+\tau$ instead of the event time $t_k$. Thus, $\{t_k+\tau\}_{k\in\mathbb{N}}$ is the \textit{impulse time} sequence. In system~\eqref{sys}, $\Delta x(t)$ indicates the impulse or the state jump at time $t$ and is defined as $\Delta x(t): =x(t^+)-x(t^-)$, where $x(t^+)$ and $x(t^-)$ represent the right and left limits of $x$ at time $t$, respectively. Here, we assume $x$ is right continuous at each impulse time, that is, $x((t_k+\tau)^+)=x(t_k+\tau)$ for $k\in\mathbb{N}$.

The objective of this research is {to design appropriate event-triggering conditions to determine the event times and establish sufficient conditions on the impulsive control input $g$ and the actuation delay $\tau$, to ensure some stability properties of system~\eqref{sys}} which are formally defined as follows.

\begin{definition}\label{asymptotic-stability}
The trivial solution of system~\eqref{sys} is said to be 
\begin{itemize}
\item \textbf{stable}, if, for any $\varepsilon>0$, there exists a $\sigma:=\sigma(\varepsilon)>0$ such that $\|x_0\|<\sigma \Rightarrow \|x(t)\|<\varepsilon$ for all $t\geq 0$;

\item \textbf{asymptotically stable}, if the trivial solution of system~\eqref{sys} is stable, and there exists a $\sigma>0$ such that $\|x_0\|<\sigma \Rightarrow \lim_{t\rightarrow \infty} \|x(t)\|=0$,
\end{itemize}
where $x(t)=x(t,0,x_0)$ denotes the solution of system~\eqref{sys}.
\end{definition}

To investigate stability of impulsive control system~\eqref{sys}, we define the upper right-hand Dini derivative of a function $V\in \mathcal{V}$ along the trajectory of system~\eqref{sys} as 
\[
\textrm{D}^+ V(x)= \limsup_{h\rightarrow 0^+} \frac{ V(x+hf(x))-V(x) }{h}.
\]
Given a positive constant $R$, we define a set 
\[
\mathcal{B}(R)=\{x\in\mathbb{R}^n : \|x\|< R\},
\]
and then make the following two assumptions for system~\eqref{sys} on $\mathcal{B}(R)$ throughout this paper.
\begin{assumption}\label{assumption1} 
For any $x\in\mathcal{B}(R)$, there exist positive constants $L_1$ and $L_2$ such that 
\[ \|f(x)\| \leq L_1\|x\| \textrm{~~and~~}  \|x+g(x)\| \leq L_2\|x\|.\]

\end{assumption}

\begin{assumption}\label{assumption2} 
There exist functions $V\in\mathcal{V}$, $\alpha_1,\alpha_2\in\mathcal{K}_{\infty}$, positive constants $\mu$ and $\rho$ such that, for any $x\in\mathcal{B}(R)$,
\begin{itemize}
\item[(i)] $\alpha_1(\|x\|)\leq V(x) \leq \alpha_2(\|x\|)$;

\item[(ii)] $\mathrm{D}^+ V(x)\leq \mu V(x)$;

\item[(iii)] if $y$ and $y+g(x)\in\mathcal{B}(R)$, then
\[
V(y+g(x))\leq \rho V(x),
\]
where $y=z(\tau)$, and $z(t)$ is the solution of the following initial value problem (IVP)
\begin{eqnarray}\label{IVP}
\left\{\begin{array}{ll}
\dot{z}(t)=f(z(t)),\cr
z(0)=x.
\end{array}\right.
\end{eqnarray} 
\end{itemize}
\end{assumption}

\begin{remark}\label{Remark0}
 %Condition~(ii) of Assumption~\ref{assumption2} characterizes the continuous dynamics of system~\eqref{sys}, and positive constant $\mu$ indicates that the continuous dynamics could be unstable. Condition~(iii) describes the impulse effects on the Lyapunov function $V$ at each impulse time. 
{In Assumption~\ref{assumption2}, the behavior of the Lyapunov candidate along the solutions is characterized by two
constants $\mu$ and $\rho$, i.e., the so-called linear rates, which allow easily verifiable conditions on actuation delays to ensure the exponential convergence of the Lyapunov candidate with the proposed event-triggering algorithms in the following two sections. Nevertheless, assumptions with nonlinear rates lead to less conservative sufficient conditions on stability of impulsive control systems. We will investigate along the line of such an extension in our future research.} At each impulse time, we can get from the impulses of system~\eqref{sys} that $x(t_k+\tau)=x((t_k+\tau)^-) + g(x(t_k))$ which corresponds to $y+g(x)$ in Assumption~\ref{assumption2}(iii). State $x((t_k+\tau)^-)$ plays the role of $y$ and is equal to $z(\tau)$, where $z(t)$ is the solution of IVP~\eqref{IVP} with $z(0)=x(t_k)$. Hence, condition~(iii) outlines the relation between the Lyapunov function $V$ at time $t_k$ when the impulsive control signals are updated and the value of $V$ at the impulse time $t_k+\tau$.
\end{remark}

\section{Continuous Event Detection}\label{Sec3}
In this section, we consider event-triggered impulsive control system~\eqref{sys} with the event times $\{t_k\}_{k\in\mathbb{N}}$ determined by an event trigger that is based on continuous sampling of the system states. To be more specific, the event times are determined by
\begin{align}\label{ETM1}
t_{k+1}=\left\{\begin{array}{ll}
\inf\{t\geq 0 : V(x(t)) \geq a e^{-bt} \}, &\mathrm{~if~} k=0\cr
\inf\{t\geq t_k+\tau : V(x(t)) \geq a e^{-bt} \}, &\mathrm{~if~} k\geq 1
\end{array}\right.
\end{align}
where $a$ and $b$ are positive constants. The event trigger~\eqref{ETM1} works as follows. For any initial state $x_0\in\mathbb{R}^n$ so that $V(x_0)<a$, the first event time $t_1$ is the moment when the Lyapunov function $V$ reaches the threshold $a e^{-bt}$. {The states stored in the impulsive controller are updated at time $t_1$.} Due to the actuation delays, the impulsive control is executed to system~\eqref{sys} at time $t=t_1+\tau$ that is an impulse time. The purpose of the impulsive control is to bring down the value of $V$ below the threshold line. Then, the next event time $t_2$ is the time when $V$ reaches the threshold again so that the control input is updated, and the impulse is activated at time $t=t_2+\tau$. This process continues for all $t\geq 0$, and is depicted in Fig.~\ref{CMechanism}. {It can be observed that the impulse times are implicitly defined by~\eqref{ETM1}, and system~\eqref{sys} with different initial conditions will have different sequences of impulse times according to~\eqref{ETM1}.} To ensure asymptotic stability of system~\eqref{sys}, the actuation delay $\tau$ should be upper bounded, and the impulsive input function $g$ needs to be properly designed such that the Lyapunov function $V$ stays below the threshold after each impulse. The following theorem provides some sufficient conditions on asymptotic stability of system~\eqref{sys} with event times determined by~\eqref{ETM1}.

\begin{figure}[!t]
\centering
\includegraphics[width=2.5in]{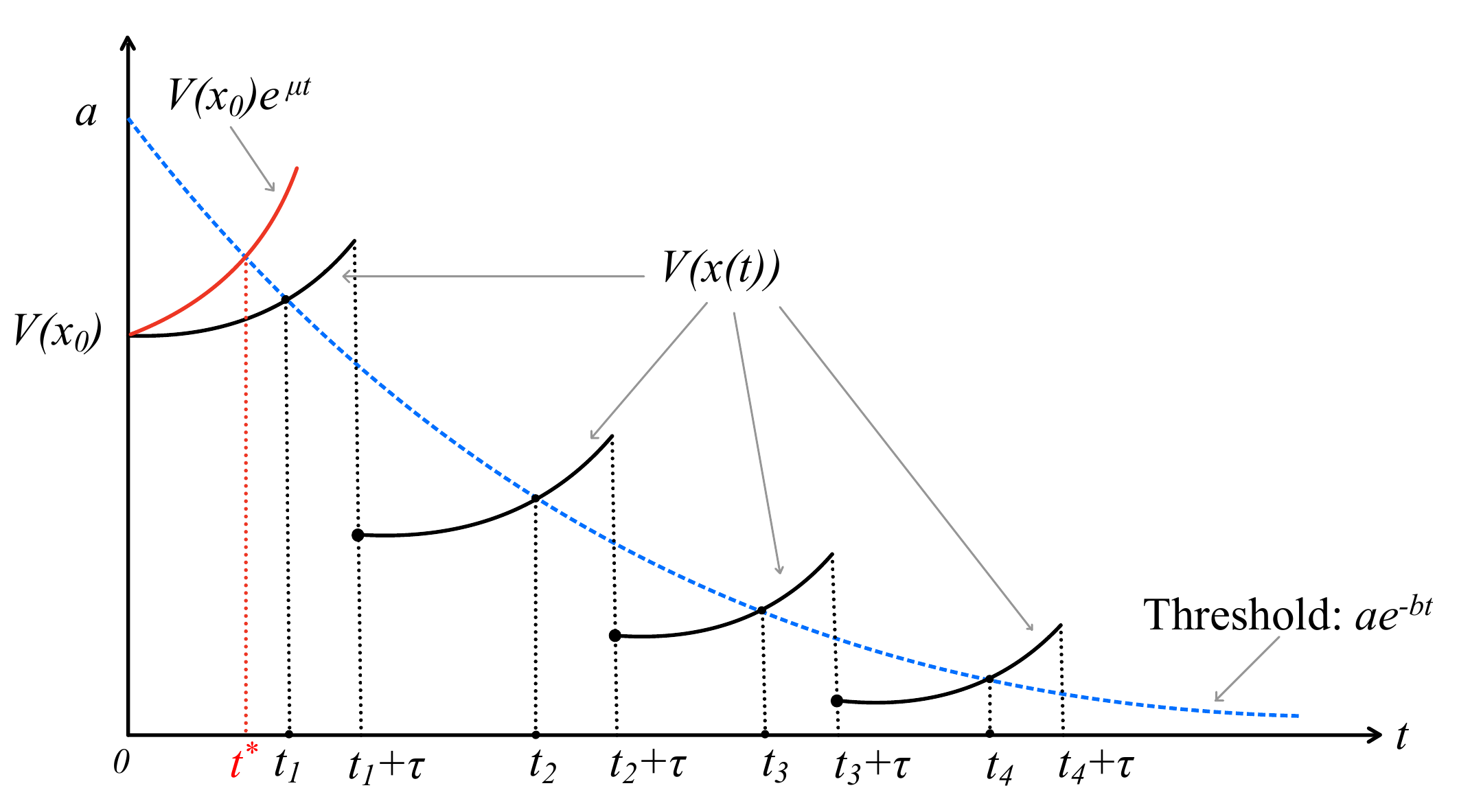}
\caption{Mechanism of event-triggered impulsive control with continuous event detection by~\eqref{ETM1}.}
\label{CMechanism}
\end{figure}

\begin{theorem}\label{Th1}
Consider system~\eqref{sys} with the event times determined by~\eqref{ETM1}, and {suppose that there exists some $R>0$ so that Assumptions~\ref{assumption1} and~\ref{assumption2} hold for all $x\in\mathcal{B}(R)$}. If {$\rho<1$,} $a<\alpha_1(R)$ and $0\leq \tau< \min\{\varepsilon_1,\varepsilon_2\}$ with  {\small
\[
\varepsilon_1= \frac{1}{\mu} \ln\left( \frac{\alpha_1(R)}{a} \right) \textrm{~and~}\varepsilon_2= \frac{1}{b} \ln\left( \frac{1}{\rho} \right)
\]}
satisfy
\begin{equation}\label{inequality1}
L_2+  \sqrt{\frac{\tau L_1 (e^{2L_1\tau}-1)}{2}} <\frac{R}{\alpha^{-1}_1(a)},
\end{equation}
%\begin{align*}
%\varepsilon_1=& \frac{1}{\mu} \ln\left( \frac{\alpha_1(R)}{a} \right)\cr
%\varepsilon_2=& \frac{1}{L_1} \ln\left( \frac{R}{\alpha^{-1}_1(a)} +1-L_2 \right)\cr
%\varepsilon_3=& \frac{1}{b} \ln\left( \frac{1}{\rho} \right)
%\end{align*}
then, for any initial state $x_0\in \mathcal{B}(\alpha^{-1}_2(a))$, the inter-event times $\{t_{k+1}-t_k\}_{k\in\mathbb{N}}$ are lower bounded by
\[
\Gamma:=\tau-\frac{\ln(\rho e^{b\tau})}{b+\mu}> \tau,
\]
that is, $t_{k+1}-t_k\geq \Gamma$ for all $k\in\mathbb{N}$. Moreover, the trivial solution of system~\eqref{sys} is {asymptotically stable}.
\end{theorem}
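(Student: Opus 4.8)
The plan is to prove three things in sequence: (a) the solution stays inside $\mathcal{B}(R)$ so that Assumptions~\ref{assumption1} and~\ref{assumption2} remain applicable for all $t\geq 0$; (b) the inter-event time lower bound $t_{k+1}-t_k\geq\Gamma$; and (c) asymptotic stability via the exponential decay of the threshold $ae^{-bt}$.

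First I would establish the \emph{between-impulse growth estimate} for $V$. On any interval $[t_k+\tau, t_{k+1})$ (and on $[0,t_1)$), the system evolves as $\dot{x}=f(x)$, so by Assumption~\ref{assumption2}(ii) and the comparison principle, $V(x(t))\leq V(x((t_k+\tau)^+))e^{\mu(t-t_k-\tau)}$. Combined with the event rule~\eqref{ETM1}, which guarantees $V(x(t))<ae^{-bt}$ for $t$ strictly before the next event and $V(x(t_{k+1}))=ae^{-bt_{k+1}}$ at the event, this gives a handle on how fast $V$ can climb back to the threshold. To bound the jump, I would use Assumption~\ref{assumption2}(iii): at the impulse time $t_k+\tau$, writing $y=z(\tau)$ with $z$ solving~\eqref{IVP} from $z(0)=x(t_k)$, we get $V(x(t_k+\tau))\leq\rho V(x(t_k))=\rho\,ae^{-bt_k}$, provided $y$ and $y+g(x(t_k))$ lie in $\mathcal{B}(R)$.

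Next, for the \emph{inter-event bound}: starting from $V(x((t_k+\tau)^+))\leq\rho a e^{-bt_k}$ and propagating forward at rate $\mu$, the earliest $V$ can reach $ae^{-bt}$ is when $\rho a e^{-bt_k}e^{\mu(t-t_k-\tau)}=ae^{-bt}$; solving for $t-t_k$ yields exactly $\Gamma=\tau-\frac{\ln(\rho e^{b\tau})}{b+\mu}$. The hypothesis $\tau<\varepsilon_2=\frac{1}{b}\ln(1/\rho)$ ensures $\rho e^{b\tau}<1$, so $\ln(\rho e^{b\tau})<0$ and hence $\Gamma>\tau$; this both excludes Zeno behavior and confirms each impulse time $t_k+\tau$ precedes the next event time $t_{k+1}$, so the event sequence is well defined. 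A subtlety here is the $k=0$ step: I must check $t_1\geq\Gamma$ as well, or rather that the argument for $k\geq1$ transfers, using $V(x_0)<a$ (guaranteed by $x_0\in\mathcal{B}(\alpha_2^{-1}(a))$ and Assumption~\ref{assumption2}(i)).

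The main obstacle — and the step requiring~\eqref{inequality1} — is the \emph{invariance argument} showing $x(t)\in\mathcal{B}(R)$ for all $t$, which is needed before any of the above estimates are legitimate. Between events $V(x(t))\leq ae^{-bt}\leq a<\alpha_1(R)$, so $\|x(t)\|<R$ by Assumption~\ref{assumption2}(i). The delicate part is the window $[t_k, t_k+\tau)$ \emph{before the impulse is applied but after the event fires}, and the impulse instant itself: here I only know $V(x(t_k))=ae^{-bt_k}\leq a$, hence $\|x(t_k)\|\leq\alpha_1^{-1}(a)$, and I must control the pure-ODE flow over the delay length $\tau$ plus the jump $g$. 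Using Assumption~\ref{assumption1}, Gr\"onwall on $\dot z=f(z)$ gives $\|z(t)\|\leq\|x(t_k)\|e^{L_1 t}$, and one can bound $\|z(\tau)-z(0)\|$ and thereby $\|z(\tau)\|$; then $\|x(t_k+\tau)\|=\|z(\tau)+g(x(t_k))\|\leq\|z(\tau)+ (z(0)+g(x(t_k))) - z(0)\|$, which after splitting into $\|z(0)+g(x(t_k))\|\leq L_2\|x(t_k)\|$ and the increment $\|z(\tau)-z(0)\|$ — estimated via $\int_0^\tau\|f(z(s))\|ds$ and a Cauchy–Schwarz / Gr\"onwall refinement yielding the $\sqrt{\tau L_1(e^{2L_1\tau}-1)/2}$ factor — produces $\|x(t_k+\tau)\|\leq\bigl(L_2+\sqrt{\tau L_1(e^{2L_1\tau}-1)/2}\bigr)\|x(t_k)\|\leq\bigl(L_2+\sqrt{\tau L_1(e^{2L_1\tau}-1)/2}\bigr)\alpha_1^{-1}(a)<R$ by~\eqref{inequality1}. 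The condition $\tau<\varepsilon_1=\frac{1}{\mu}\ln(\alpha_1(R)/a)$ plays the complementary role of keeping $V$ below $\alpha_1(R)$ through the delay window when estimating via $V$ rather than the norm directly. I would run this as an induction on $k$: assuming the trajectory has stayed in $\mathcal{B}(R)$ up through $t_k$, the above shows it stays in $\mathcal{B}(R)$ through $[t_k,t_k+\tau]$, and then the threshold bound carries it to $t_{k+1}$.

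Finally, \emph{asymptotic stability}: for $t\geq t_1$ we have $V(x(t))\leq ae^{-bt}$ by construction (on $[0,t_1)$ we have $V(x(t))<ae^{-bt}$ by definition of $t_1$, and at each subsequent point $V$ is pinned below the threshold between events, while at impulse times it drops by factor $\rho<1$). Hence $\alpha_1(\|x(t)\|)\leq V(x(t))\leq ae^{-bt}\to0$, giving $\|x(t)\|\leq\alpha_1^{-1}(ae^{-bt})\to0$, which is attractivity. For Lyapunov stability, given $\varepsilon>0$ one chooses $\sigma$ small enough that $\|x_0\|<\sigma$ forces $V(x_0)$ small; then on $[0,t_1)$ the bound $V(x(t))\leq\max\{V(x_0),ae^{-bt}\}$-type reasoning — more carefully, $V(x(t))<ae^{-bt}\le a$ once past $t_1$ and $V(x(t))\le V(x_0)e^{\mu t_1}$ before it, combined with the uniform-in-initial-data lower bound $\Gamma$ on inter-event times — yields a bound on $\|x(t)\|$ that can be made smaller than $\varepsilon$. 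I would present (c) briefly once (a) and (b) are in place, since it is essentially reading off the threshold decay.
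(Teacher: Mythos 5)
Your parts (a) and (b) follow the paper's proof essentially verbatim: the invariance of $\mathcal{B}(R)$ is obtained exactly as in the paper (threshold bound between impulse time and next event, a contradiction/Gr\"onwall argument on the delay window using $\tau<\varepsilon_1$, and the Cauchy--Schwarz estimate producing the factor $L_2+\sqrt{\tau L_1(e^{2L_1\tau}-1)/2}$ so that \eqref{inequality1} closes the loop), and the dwell-time bound $\Gamma$ is derived by the same propagation of $\rho a e^{-bt_k}$ up to the threshold. Part (c), however, contains two genuine problems. First, the claim ``for $t\geq t_1$ we have $V(x(t))\leq ae^{-bt}$ by construction'' is false on the delay windows $[t_k,t_k+\tau)$: the event fires at $t_k$ precisely because $V$ has \emph{reached} the threshold, and the impulse is not applied until $t_k+\tau$, so $V$ climbs above $ae^{-bt}$ there. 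The correct bound (which the paper derives) is $V(x(t))< ae^{-bt}e^{(b+\mu)\tau}$ on those windows; the extra constant factor does not harm attractivity, but your stated inequality is wrong as written and you should not use it.

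Second, and more seriously, your stability argument is missing its key mechanism. Knowing $V(x(t))\leq ae^{-bt}e^{(b+\mu)\tau}\leq ae^{(b+\mu)\tau}$ for $t\geq t_1$ gives a bound independent of $x_0$ that does \emph{not} shrink as $\|x_0\|\to 0$, so it cannot by itself yield Lyapunov stability; and the uniform lower bound $\Gamma$ on inter-event times, which you invoke, is irrelevant here. What is needed is a quantitative lower bound on the \emph{first} event time: since $V(x(t))\leq V(x_0)e^{\mu t}$ before any impulse, $t_1$ is at least the crossing time $t^*=\frac{1}{\mu+b}\ln\bigl(a/V(x_0)\bigr)$ of $V(x_0)e^{\mu t}$ with $ae^{-bt}$, whence $\sup_{t\geq 0}V(x(t))\leq ae^{-bt^*}e^{\mu\tau}$, and this quantity tends to $0$ as $V(x_0)\to 0$ because $t^*\to\infty$. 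That is how the paper chooses $\sigma(\varepsilon)$, and without it your part (c) does not establish stability. A minor further omission: you should also dispose of the case in which only finitely many events occur (some $t_{k+1}=\infty$), for which attractivity and the $\mathcal{B}(R)$-invariance must be checked separately, as the paper does at the end of its proof.
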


\begin{proof} {We first assume that $t_k$ is finite for each $k\in \mathbb{N}$}. To guarantee the validity of Assumptions~\ref{assumption1} and~\ref{assumption2}, we need to ensure $x(t)\in \mathcal{B}(R)$ for all $t\geq 0$. {For $t\in [t_0,t_1]$, we can derive from~\eqref{ETM1} that $V(x(t))\leq a e^{-bt}$,
and then
\[\alpha_1(\|x\|)\leq V(x(t))\leq a e^{-bt}\leq a,\]
that is, $\|x\|\leq \alpha^{-1}_1(a)<R$. Hence, $x(t)\in\mathcal{B}(R)$ for $t\in [t_0,t_1]$.} Next, we show $x(t)\in \mathcal{B}(R)$ for all $t\in[t_k,t_k+\tau)$ and $k\in\mathbb{N}$ by contradiction argument. Suppose there exists some $k\in\mathbb{N}$ and $t\in [t_k,t_k+\tau)$ so that $\|x(t)\|\geq R$, then we define 
\[\bar{t}:=\inf\left\{t\in[t_k,t_k+\tau): \|x(t)\|\geq R\right\}.\]
The fact $\alpha_1(\|x(t_k)\|)\leq V(x(t_k))=a e^{-bt_k}$ implies $\|x(t_k)\|\leq \alpha^{-1}_1(ae^{-bt_k})\leq \alpha^{-1}_1(a)<R$. We can conclude from the definition of $\bar{t}$ and the continuity of $x$ on $[t_k,t_k+\tau)$ that $\|x(\bar{t})\|=R$ and $\|x(t)\|<R$ for $t\in[t_k,\bar{t})$. We then get from (ii) of Assumption~\ref{assumption2} that
\begin{align*}
V(x(\bar{t})) \leq V(x(t_k)) e^{\mu(\bar{t}-t_k)} <   a e^{-bt_k}e^{\mu\tau} < a e^{\mu\tau}
\end{align*}
which implies $\alpha_1(\|x(\bar{t})\|)<a e^{\mu\tau}$, that is, $\|x(\bar{t})\|<\alpha^{-1}_1(ae^{\mu\tau})< R$, where we used $\tau <\varepsilon_1=\frac{1}{\mu} \ln\left( \frac{\alpha_1(R)}{a} \right)$. This is a contradiction to the definition of $\bar{t}$. Hence, we conclude that $x(t)\in\mathcal{B}(R)$ for all $t\in[t_k,t_k+\tau)$.

At $t=t_k+\tau$, we have
\begin{align*}
x(t_k+\tau) &= x((t_k+\tau)^-) +g(x(t_k)) \cr
            &= x(t_k)+\int^{t_k+\tau}_{t_k} f(x(s)) \mathrm{d}s +g(x(t_k))
\end{align*}
which implies
\begin{align}\label{y+g(x)}
&~~~ \|x(t_k+\tau)\| \cr
&\leq \|x(t_k)+g(x(t_k))\| + \sqrt{\tau} \left( \int^{t_k+\tau}_{t_k} \|f(x(s))\|^2 \mathrm{d}s \right)^{\frac{1}{2}} \cr
                    &\leq L_2\|x(t_k)\| + \sqrt{\tau} L_1 \left( \int^{t_k+\tau}_{t_k} \|x(s)\|^2 \mathrm{d}s \right)^{\frac{1}{2}} \cr
                    &\leq L_2\|x(t_k)\|  + \sqrt{\tau} L_1 \left( \int^{t_k+\tau}_{t_k} \|x(t_k)\|^2 e^{2L_1(s-t_k)} \mathrm{d}s \right)^{\frac{1}{2}} \cr
                    & = \left( L_2+  \sqrt{\frac{\tau L_1 (e^{2L_1\tau}-1)}{2}} \right) \|x(t_k)\|\cr
                    &\leq \left( L_2+  \sqrt{\frac{\tau L_1 (e^{2L_1\tau}-1)}{2}} \right)  \alpha^{-1}_1(a) \cr
                    &< R,
\end{align}
where we used Schwarz's inequality in the first inequality and Assumption~\ref{assumption1} in the second inequality with the fact $x(t)\in\mathcal{B}(R)$ for all $t\in[t_k,t_k+\tau)$. The third inequality is from the following
\begin{equation}\label{derivative}
\frac{\mathrm{d}}{\mathrm{d}t} \|x(t)\|^2  = \frac{\mathrm{d}}{\mathrm{d}t} x^T(t)x(t) = 2x^T(t)f(x(t)) \leq 2L_1 \|x(t)\|^2,
\end{equation}
where $t\in[t_k,t_k+\tau)$. The last inequality of~\eqref{y+g(x)} follows from~\eqref{inequality1}. We can conclude from~\eqref{y+g(x)} that $x((t_k+\tau)^-) +g(x(t_k))\in\mathcal{B}(R)$ and clearly $x((t_k+\tau)^-)=z(\tau)$ if $z(t)$ is the unique solution of IVP~\eqref{IVP} with initial condition $z(0)=x(t_k)$, which then implies all conditions of~(iii) from Assumption~\ref{assumption2} are satisfied. Hence, Assumption~\ref{assumption2} states that 
\begin{align}\label{afterimpulse}
V(x(t_k+\tau)) & =   V\left(x((t_k+\tau)^-) +g(x(t_k))\right)\cr
               &\leq \rho V(x(t_k)) = \rho a e^{-bt_k} = \rho e^{b\tau} a e^{-b(t_k+\tau)}\cr
               & <   a e^{-b(t_k+\tau)},
\end{align}
which implies that $V$ is below the threshold line at time $t=t_k+\tau$, and $V(x(t))\leq a e^{-bt}$ for $t\in[t_k+\tau,t_{k+1}]$. The last inequality of~\eqref{afterimpulse} was derived from the condition $\tau<\varepsilon_2=\frac{1}{b} \ln\left( \frac{1}{\rho} \right)$. Therefore, we can conclude that $x(t)\in\mathcal{B}(R)$ for all $t\geq 0$.

From~\eqref{afterimpulse} and~(ii) of Assumption~\ref{assumption2}, we have
\begin{align*}
V(t_{k+1}) &\leq V(t_k+\tau) e^{\mu(t_{k+1}-t_k-\tau)},
\end{align*}
then, we can get from the definition of the event times and~\eqref{afterimpulse} that
\begin{align*}
a e^{-b t_{k+1}} \leq \rho e^{b\tau} a e^{-b(t_k+\tau)} e^{\mu(t_{k+1}-t_k-\tau)},
\end{align*}
which implies 
\[
t_{k+1}-t_k\geq \Gamma=\tau-\frac{\ln(\rho e^{b\tau})}{b+\mu},
\]
that is, the inter-event times $\{t_{k+1}-t_k\}_{k\in\mathbb{N}}$ are lower bounded by $\Gamma$ which is bigger than $\tau$ because of $\tau<\varepsilon_2$.

In what follows, we show the attractivity of system~\eqref{sys}. For any $t\geq t_1$, there exists some $k\in\mathbb{N}$ so that one of the following cases occurs.

{Case I:} $t\in[t_k+\tau,t_{k+1})$

According to the definition of the event times, we have
\begin{equation}\label{V(x)1}
V(x(t))< a e^{-bt}.
\end{equation}

{Case II:} $t\in[t_{k},t_{k}+\tau)$

From~(ii) of Assumption~\ref{assumption2}, we get
\begin{align}\label{V(x)2}
V(x(t)) &\leq V(x(t_{k})) e^{\mu(t-t_{k})} \cr
        & =   a e^{-bt_{k}} e^{\mu(t-t_{k})} \cr
        & =   a e^{-bt} e^{(b+\mu)(t-t_{k})} \cr
        & <   a e^{-bt} e^{(b+\mu)\tau}.
\end{align}
From~\eqref{V(x)1},~\eqref{V(x)2}, and~(i) of Assumption~\ref{assumption2}, we conclude $\alpha_1(\|x(t)\|)\leq V(x(t))<a e^{-bt} e^{(b+\mu)\tau}$ for all $t\geq t_1$, which then implies the atractivity of system~\eqref{sys}.

Finally, we show stability of system~\eqref{sys}. According to the definition of the event times, we have $V(x(t))\leq a e^{-bt}$ for $t\in [0,t_1]$. From $x_0\in\mathcal{B}(\alpha^{-1}_2(a))$ and~(ii) of Assumption~\ref{assumption2}, we have $V(x_0)\leq \alpha_2(\|x_0\|)< a$, and there exists a unique $t^*>0$ such that $V(x_0)e^{\mu t^*}=a e^{-b t^*}$, which means 
\[
t^*=\frac{1}{\mu+b} \ln\left( \frac{a}{V(x_0)} \right).
\]
More explicitly, the graph of $V(x_0)e^{\mu t}$ intersects with the threshold line $a e^{-bt}$ at time $t=t^*$, and such an intersection is unique due to the fact that $V(x_0)e^{\mu t}$ is strictly increasing and $a e^{-bt}$ is strictly decreasing. According to the trigger~\eqref{ETM1}, we can see that $t_1$ is the time when $V(x(t))$ reaches the threshold line for the first time. Therefore, the fact that $V(x(t))\leq V(x_0)e^{\mu t}$ implies $t^*\leq t_1$.
See Fig.~\ref{CMechanism} for the demonstration of $t^*$ and the relation between $V(x(t))$ and $V(x_0)e^{\mu t}$ on the interval $[0,t_1+\tau)$. 

If $t^*<t_1$, we then can make the following two conclusions on $[0,t_1)$.
\begin{itemize}
\item When $t\in[0,t^*]$, we have 
\[
V(x(t))\leq V(x_0)e^{\mu t}\leq V(x_0) e^{\mu t^*}=a e^{-b t^*}.
\]

\item  When $t\in[t^*,t_1)$, we have $V(x(t))\leq a e^{-bt}\leq a e^{-b t^*}$.

\end{itemize}
We can derive from the above two scenarios that
\begin{equation}\label{stability1.1}
V(x(t))\leq a e^{-b t^*} \textrm{~for~all~} t\in[0,t_1).
\end{equation}
If $t^*=t_1$ then $V(x(t))\leq V(x_0)e^{\mu t}\leq V(x_0)e^{\mu t^*}= a e^{-b t^*}$ for all $t\in [0,t_1)$, which implies~\eqref{stability1.1} is also true.

When $t\geq t_1$, there is a $k\in\mathbb{N}$ so that $t\in[t_k,t_{k+1})$. If $t\in [t_k, t_k+\tau)$, we get from~\eqref{V(x)2} that $V(x(t))\leq a e^{-b t_k} e^{\mu \tau}$. If $t\in [t_k+\tau,t_{k+1})$, we have from~\eqref{V(x)1} that $V(x(t))\leq a e^{-b t}\leq a e^{-b t_k}$. Then, we can conclude that
\begin{align}\label{stability1.2}
V(x(t)) \leq a e^{-b t_k} e^{\mu \tau} \leq a e^{-b t_1} e^{\mu \tau} \leq a e^{-b t^*} e^{\mu \tau}
\end{align}
for $t\geq t_1$. Hence, we derive from~\eqref{stability1.1} and~\eqref{stability1.2} that 
\begin{align}\label{stability1.3}
V(x(t)) \leq a e^{-b t^*} e^{\mu \tau} \textrm{~for~all~} t\geq 0.
\end{align}
Applying~(i) of Assumption~\ref{assumption2} to~\eqref{stability1.3} with the definition of $t^*$ yields 
\begin{align}\label{stability1.4}
\|x(t)\| \leq \alpha^{-1}_1\left(a e^{-b t^*} e^{\mu \tau}\right) =   \alpha^{-1}_1\left(a e^{-\frac{b}{\mu+b} \ln\left( \frac{a}{V(x_0)} \right)} e^{\mu \tau}\right)
\end{align}
for all $t\geq 0$. Therefore, for any $\varepsilon>0$, there exists a 
\[
\sigma=\alpha^{-1}_2\left( a \left(  \frac{\alpha_1(\varepsilon)}{a e^{\mu\tau}}  \right)^{\frac{\mu+b}{b}} \right)
\]
such that, for any $\|x_0\| < \min\left\{\sigma,\alpha_2^{-1}(a)\right\}$, we can derive from~\eqref{stability1.4} that $\|x(t)\|<\varepsilon$ for all $t\geq 0$. This shows stability of the trivial solution of system~\eqref{sys}.

{If there exists some $k\in \mathbb{N}$ such that 
\[ V(x(t)) < a e^{-bt} \textrm{ for all } t\geq t_{k}+\tau,\]
that is, $t_{k+1}=\infty$, then the impulses will not be triggered after time $t_{k}+\tau$, and attractivity follows naturally for this scenario. From the above discussion, we can conclude similarly that $x(t)\in\mathcal{B}(R)$ and stability of  system~\eqref{sys}. }
\end{proof}

\begin{remark}\label{Remark1}
It can be seen from the event trigger~\eqref{ETM1} that the inter-event times $\{t_{k+1}-t_k\}_{k\in\mathbb{N}}$ are lower bounded by $\tau$ provided $\tau>0$, and there is no need to monitor the system states over the time interval $(t_k, t_k+\tau)$. We can also observe that the event trigger~\eqref{ETM1} requires continuous monitoring of the states over the time interval $[t_k+\tau,t_{k+1}]$. In Theorem~\ref{Th1}, we derived a larger bound $\Gamma>\tau$ {(because of $\ln(\rho e^{b\tau})<0$)} which is also applicable to system~\eqref{sys} without actuation delays, that is, $\tau=0$. Therefore, system~\eqref{sys} does not exhibit Zeno behavior that is a phenomenon of the event-triggered control system triggering infinitely many events over a finite time interval. {In the area of impulsive differential equations, system~\eqref{sys} is said to be absent from pulse phenomenon} which is described as the system experiencing an infinite number of impulses in a finite amount of time.
\end{remark}
{
\begin{remark}
%Condition $x_0\in\mathcal{B}(\alpha^{-1}_2(a))$ ensures $V(x_0)<a$, that is, the evolution of $V(x(t))$ starts below the threshold line $a e^{-bt}$. If $V(x_0)>a$, we can define the initial time $t=0$ as the first event time, and the impulse input is executed at time $t=\Gamma$ due to the actuation delay. The purpose of such an impulse is to draw state $x$ into the open ball $\mathcal{B}(\alpha^{-1}_2(a))$, and then our analysis in Theorem~\ref{Th1} with the trigger~\eqref{ETM1} applies afterwards. Nevertheless, selecting parameter $a$ so that $a>V(x_0)$ helps avoid this situation. Therefore, we mainly focus on the scenario of initial states from $\mathcal{B}(\alpha^{-1}_2(a))$ throughout this study.
To ensure stability of system~\eqref{sys}, two types of conditions on $\tau$ are included in Theorem~\ref{Th1}: I. $0\leq \tau<\varepsilon_1$ with~\eqref{inequality1} satisfied; II. $\tau<\varepsilon_2$. The type I condition guarantees that the system trajectory stays in $\mathcal{B}(R)$ for all $t$, so that Assumptions~\ref{assumption1} and~\ref{assumption2} can be applied, and the type II condition ensures the validity of the event-triggering algorithm with event times determined by~\eqref{ETM1}. If $f$ and $g$ are globally Lipschitz, i.e., $R=\infty$, then 
both $0\leq \tau<\varepsilon_1$ and~\eqref{inequality1} hold for all $\tau$, and the only requirement on the actuation delay is $\tau<\varepsilon_2$, which implies $\rho e^{b\tau}<1$. Large delay $\tau$ allows the Lyapunov function to go over and deviate far from the threshold, then big jump of the Lyapunov function at each impulse time is expected such that the Lyapunov function can be smaller than the threshold after the impulse. For $f$ and/or $g$ being locally Lipschitz on $\mathcal{B}(R)$, large $R$ may lead to large $L_1$ and $L_2$, but the largest admissible delay $\tau$ is not directly conclusive from~\eqref{inequality1}. More specifically, consider the scalar system~\eqref{sys} with $f(x)=g(x)=x^3$. Both functions satisfy Assumption II.1 on $\mathcal{B}(R)$ with $L_1=L_2=R^2$ for arbitrary $R>0$. We then can derive from~\eqref{inequality1} that large $R$ allows small admissible $\tau$. It is worth mentioning that the continuous dynamics of $\dot{x}=x^3$ has finite escape time, that is, the nontrivial state goes to infinity in finite time. Nevertheless, the type I condition on $\tau$ in Theorem~\ref{Th1} excludes the finite escape time from the impulsive scalar system by enforcing the state trajectory to stay in $\mathcal{B}(R)$. Hence, global existence of the solution is guaranteed. The type II condition then ensures asymptotic stability. On the other hand, consider another scalar system with $f(x)=cx$ and $g(x)=x^{3/2}$, where $c$ is a positive constant. Then, $f$ is globally Lipschitz with $L_1=c$ and $g$ satisfies Assumption~II.1 on $\mathcal{B}(R)$ with $L_2=\sqrt{R}$ for any $R>0$. We conclude from~\eqref{inequality1} that large enough $R$ could allow large admissible $\tau$. It can be seen that the dependence of $\tau$ on $R$ also relies on the change of $L_1$ and/or $L_2$.
\end{remark}
}

\section{Periodic Event Detection}\label{Sec4}
In this section, we consider event-triggered impulsive control for system~\eqref{sys} with event times $\{t_k\}_{k\in\mathbb{N}}$ determined by an event trigger that relies only on periodic sampling of the system states. Suppose $\delta>0$ is the sampling period to be determined, and the system states are only measured at the moments $t=j\delta$ for $j\in\mathbb{N}$. We propose an event trigger with event times determined as 
\begin{align}\label{ETM2}
t_{k+1}=\left\{\begin{array}{ll}
\inf\{j\delta\geq 0 : V(x(j\delta)) \geq a e^{-bj\delta} \}, &\mathrm{~if~} k=0\cr
\inf\{j\delta\geq t_k+\tau : V(x(j\delta)) \geq a e^{-bj\delta} \}, &\mathrm{~if~} k\geq 1
\end{array}\right.
\end{align}
where $j\in\mathbb{N}$, and $a,b$ are positive constants. At the sampling times $\{j\delta\}_{j\in\mathbb{N}}$, system~\eqref{sys} evaluates decisions about whether to trigger the event or not, according to event trigger~\eqref{ETM2}. Identically to the mechanism of the trigger~\eqref{ETM1}, an event occurs when the Lyapunov function $V$ surpasses the threshold $a e^{-bt}$. Nevertheless, due to the periodic state sampling, the event times determined by~\eqref{ETM2} may not be the moments when $V$ exactly reaches the threshold line and could possibly be some times after these moments.
See Fig.~\ref{PMechanism} for a demonstration. The objective of each impulsive control input is to bring the value of $V$ down below the threshold. Theorem~\ref{Th2} establishes such conditions on the impulse function $g$ and some sufficient conditions on the actuation delay $\tau$ and the sampling period $\delta$ to guarantee asymptotic stability of system~\eqref{sys}.

\begin{figure}[!t]
\centering
\includegraphics[width=2.8in]{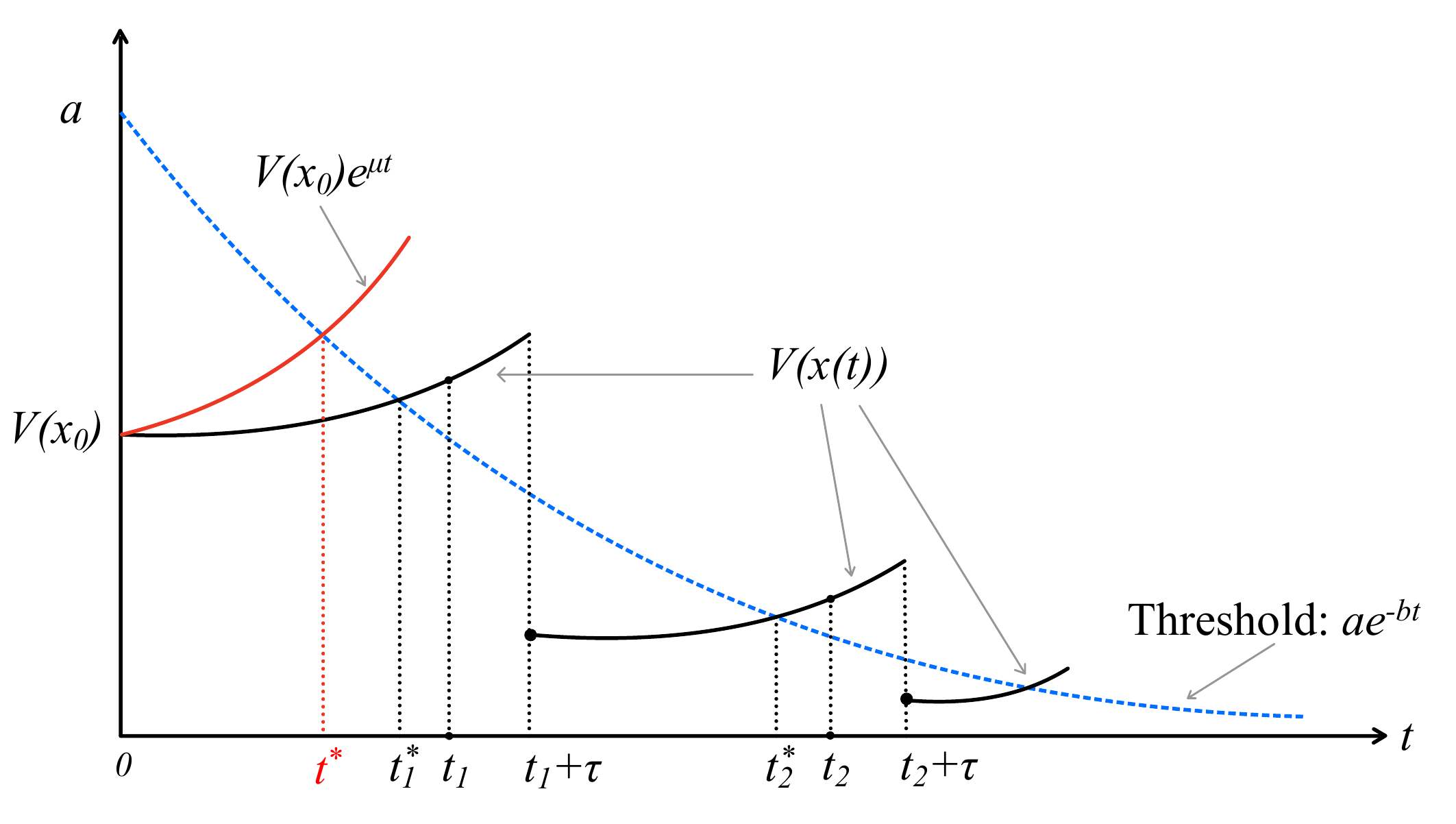}
\caption{Mechanism of event-triggered impulsive control with periodic event detection by~\eqref{ETM2}.}
\label{PMechanism}
\end{figure}

\begin{theorem}\label{Th2}
Consider system~\eqref{sys} with the event times determined by~\eqref{ETM2}, and {suppose that there exists some $R>0$ so that Assumptions~\ref{assumption1} and~\ref{assumption2} hold for all $x\in\mathcal{B}(R)$}. If $\mu$, $\rho$, $a$, $\tau$ and $\delta$ satisfy the following inequalities
\begin{equation}\label{inequality2.1}
\max\left\{e^{(\mu+b)\delta}, e^{\mu(\tau+\delta)}\right\}\leq \frac{\alpha_1(R)}{a},
\end{equation}
%\begin{equation}\label{inequality2.2}
%e^{L_1\delta} (L_2+e^{L_1\tau}-1)<\frac{R}{\alpha^{-1}_1(a)}
%\end{equation}
\begin{equation}\label{inequality2.3}
\rho e^{b(\tau+\delta)} e^{\mu\delta} < 1,
\end{equation}
%\begin{equation}\label{inequality2}
%L_2+  \sqrt{\frac{\tau L_1 (e^{2L_1\tau}-1)}{2}} <\frac{R}{\alpha^{-1}_1(a)},
%\end{equation}
and~\eqref{inequality1} holds with constant $a$ replaced by $\bar{a}:=a e^{(\mu+b)\delta}$, then, for any initial state $x_0\in \mathcal{B}(\alpha^{-1}_2(a))$, the inter-event times $\{t_{k+1}-t_k\}_{k\in\mathbb{N}}$ are bigger than
\[
\Gamma:=\tau-\frac{\ln\left( \rho e^{b(\tau+\delta)} e^{\mu\delta} \right)}{b+\mu} >\tau.
\]
Moreover, the trivial solution of system~\eqref{sys} is asymptotically stable.
\end{theorem}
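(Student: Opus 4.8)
The plan is to follow the architecture of the proof of Theorem~\ref{Th1}, adding the bookkeeping forced by the fact that events can only be registered at the sampling instants $j\delta$, so that the Lyapunov function may overshoot the threshold line $ae^{-bt}$ by a bounded amount before an event fires. First I would assume every $t_k$ is finite and establish the overshoot estimate
\[
V(x(t_k))<\bar a\,e^{-bt_k},\qquad \bar a:=a e^{(\mu+b)\delta}.
\]
Indeed, $t_{k+1}$ is by~\eqref{ETM2} the smallest multiple of $\delta$ that is $\ge t_k+\tau$ and at which $V$ exceeds the threshold; consequently the sampling instant immediately preceding $t_k$ lies within distance $\delta$ of $t_k$ and the measured value of $V$ there is strictly below the threshold (for $k=1$ use $V(x_0)<a$, which also gives $t_1\ge\delta$). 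Propagating this bound forward through Assumption~\ref{assumption2}(ii), and observing that if an impulse falls between that instant and $t_k$ then it only pushes $V$ further below the threshold, yields $V(x(t_k))<ae^{-b(t_k-\delta)}e^{\mu\delta}=\bar ae^{-bt_k}$. The same reasoning applied over $[0,t_1]$ and over $[t_k+\tau,t_{k+1}]$ gives $V(x(t))\le\bar ae^{-bt}$ on those intervals, while over $[t_k,t_k+\tau)$ it gives $V(x(t))\le\bar ae^{-bt_k}e^{\mu\tau}\le\bar ae^{-b\delta}e^{\mu\tau}=ae^{\mu(\tau+\delta)}$, using $t_k\ge\delta$.

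Next I would verify $x(t)\in\mathcal B(R)$ for all $t\ge0$, so Assumptions~\ref{assumption1}--\ref{assumption2} are legitimately invoked. On the sampling-governed intervals, $V(x(t))\le\bar ae^{-bt}$ together with $\bar a\le\alpha_1(R)$, i.e.\ the first term of~\eqref{inequality2.1}, gives $\|x(t)\|\le\alpha_1^{-1}(\bar a)\le R$; on $[t_k,t_k+\tau)$, the bound $ae^{\mu(\tau+\delta)}\le\alpha_1(R)$, i.e.\ the second term of~\eqref{inequality2.1}, does the same. Then, exactly as in the chain~\eqref{y+g(x)}, I would estimate $\|x(t_k+\tau)\|\le\bigl(L_2+\sqrt{\tau L_1(e^{2L_1\tau}-1)/2}\bigr)\|x(t_k)\|\le\bigl(L_2+\sqrt{\tau L_1(e^{2L_1\tau}-1)/2}\bigr)\alpha_1^{-1}(\bar a)<R$, the last step being~\eqref{inequality1} with $a$ replaced by $\bar a$. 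This confirms the hypotheses of Assumption~\ref{assumption2}(iii) with $y=x((t_k+\tau)^-)=z(\tau)$, $z(0)=x(t_k)$, so that
\[
V(x(t_k+\tau))\le\rho V(x(t_k))<\rho\bar ae^{-bt_k}=\rho e^{b(\tau+\delta)}e^{\mu\delta}\,ae^{-b(t_k+\tau)}<ae^{-b(t_k+\tau)},
\]
the final strict inequality being precisely~\eqref{inequality2.3}. Hence $V$ is driven strictly below the threshold at every impulse time, $x(t)\in\mathcal B(R)$ for all $t$, and the trigger~\eqref{ETM2} is well defined.

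For the dwell time I would combine $ae^{-bt_{k+1}}\le V(x(t_{k+1}))$ (the trigger fired), $V(x(t_{k+1}))\le V(x(t_k+\tau))e^{\mu(t_{k+1}-t_k-\tau)}$ (no impulse on $(t_k+\tau,t_{k+1})$, plus Assumption~\ref{assumption2}(ii)), and $V(x(t_k+\tau))<\rho\bar ae^{-bt_k}$; solving for $t_{k+1}-t_k$ gives $t_{k+1}-t_k\ge\Gamma=\tau-\ln\bigl(\rho e^{b(\tau+\delta)}e^{\mu\delta}\bigr)/(b+\mu)$, and $\Gamma>\tau$ is again~\eqref{inequality2.3}. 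Attractivity then follows from the two running bounds of the first paragraph: on every subinterval $V(x(t))\le Ce^{-bt}$ for one fixed constant $C$ (namely $\bar a$ on the sampling intervals, and $ae^{(\mu+b)\delta}e^{(\mu+b)\tau}$ on the delay intervals after estimating $e^{-bt_k}\le e^{-bt}e^{b\tau}$), so $\alpha_1(\|x(t)\|)\le Ce^{-bt}\to0$. Stability is obtained as at the end of the proof of Theorem~\ref{Th1}: introduce $t^\ast=\frac{1}{\mu+b}\ln\bigl(a/V(x_0)\bigr)$, use $V(x(t))\le V(x_0)e^{\mu t}$ on $[0,t_1)$ (no impulse yet) together with the sampling bound $V\le\bar ae^{-bt}$ to obtain $V(x(t))\le\bar ae^{-bt^\ast}$ on $[0,t_1)$, extend this to $t\ge t_1$ up to a fixed bounded factor, apply Assumption~\ref{assumption2}(i), and read off $\sigma(\varepsilon)$ through $\alpha_1$ and $\alpha_2$. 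Finally I would dispose of the case $t_{k+1}=\infty$ verbatim as in Theorem~\ref{Th1}.

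The step I expect to be the main obstacle is the overshoot bookkeeping of the first paragraph: one must argue uniformly that the \emph{measured} value $V(x(t_k))$ can exceed $ae^{-bt_k}$ by no more than the factor $e^{(\mu+b)\delta}$, even when an impulse occurs between the last ``safe'' sampling instant and $t_k$, and must keep track of the elementary identity $ae^{-b(t_k-\delta)}e^{\mu\delta}=\bar ae^{-bt_k}$; once $\bar a$ is correctly pinned down, essentially every subsequent estimate is the corresponding estimate in the proof of Theorem~\ref{Th1} with $a$ replaced by $\bar a$ on the sampling-controlled intervals and an extra $e^{\mu\delta}$ (or $e^{b\delta}$) inserted wherever the periodic detection delay intervenes.
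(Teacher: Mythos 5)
Your proposal is correct and follows essentially the same route as the paper's proof: the same overshoot factor $e^{(\mu+b)\delta}$ yielding $\bar a$, the same interval decomposition, the same use of \eqref{inequality2.1}, \eqref{inequality1}, and \eqref{inequality2.3}, and the same reduction of the stability part to Theorem~\ref{Th1} with $a\mapsto\bar a$ and $\tau\mapsto\tau+\delta$. The only cosmetic difference is that the paper anchors its estimates at the exact threshold-crossing times $t^*_k\in(t_k-\delta,t_k]$ (with $t^*_k>t_{k-1}+\tau$) and runs the $\mathcal{B}(R)$ membership through an explicit contradiction argument, whereas you anchor at the last ``safe'' sampling instant or the impulse time; both give identical bounds.
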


\begin{proof} {Without loss of generality, we assume that $t_k$ is finite for each $k\in \mathbb{N}$}. 
From the fact $x_0\in \mathcal{B}(\alpha^{-1}_2(a))$ and the definition of the event times~\eqref{ETM2}, we conclude that there exists a $t^*_1\in (t_1-\delta,t_1]$ such that $V(x(t^*_1))=a e^{-bt^*_1}$ and $V(x(t))\geq ae^{-bt}$ for $t\in[t^*_1,t_1]$, then $\|x(t^*_1)\|\leq \alpha^{-1}_1(a e^{-bt^*_1})< R$. See Fig.~\ref{PMechanism} for a demonstration of $t^*_1$. Since system states are only measured at periodic sampling times, it is not guaranteed for $V(x(t))<a e^{-bt}$ when $t$ is not a sampling moment. This is different from the proof of Theorem~\ref{Th1}. Thus, we need to bound $V(x(t))$ on $[0,t^*_1]$. To do so, we consider an arbitrary $t\in[0,t^*_1]$, then there is some $j\in\mathbb{N}$ so that $t\in[(j-1)\delta,j\delta]$. Since $t_1$ is the first sampling moment so that $V(x(t))> a e^{-bt}$, we have $V(x(t))\leq a e^{-bt}$ at $t=(j-1)\delta$. Then, for $t\in[0,t^*_1]$, we obtain
\begin{align}\label{overthreshold}
V(x(t)) &\leq V(x((j-1)\delta)) e^{\mu(t-(j-1)\delta)} \cr
        &\leq a e^{-b(j-1)\delta} e^{\mu\delta} \cr
        & =   a e^{\mu\delta} e^{b(t-(j-1)\delta)} e^{-bt} \cr
        &\leq a e^{(\mu+b)\delta} e^{-bt},
\end{align}
which implies 
\begin{align*}
\alpha_1(\|x(t)\|)\leq V(x(t)) \leq a e^{(\mu+b)\delta} e^{-bt} \leq a e^{(\mu+b)\delta},
\end{align*}
that is, $\|x(t)\|\leq \alpha^{-1}_1(a e^{(\mu+b)\delta})<R$ where we used~\eqref{inequality2.1}. Hence, $x(t)\in\mathcal{B}(R)$ for $t\in[0,t^*_1]$.

In what follows, we shall show $x(t)\in\mathcal{B}(R)$ for all $t\in[t^*_1,t_1+\tau)$ by contradiction argument. Suppose, on the contrary, that there exists some $t\in [t^*_1,t_1+\tau)$ so that $\|x(t)\|\geq R$, then we define 
\[\tilde{t}:=\inf\{t\in[t^*_1,t_1+\tau): \|x(t)\|\geq R\},\]
which implies $\|x(\tilde{t})\|=R$ and $x(t)<R$ for $t\in [t^*_1,\tilde{t})$. From~(i),(ii) of Assumption~\ref{assumption2} and the fact $t_1-\delta<t^*_1<\tilde{t}<t_1+\tau$, we have
\begin{align*}
\alpha_1(\|x(\tilde{t})\|)\leq V(x(\tilde{t})) \leq V(x(t^*_1)) e^{\mu(\tilde{t}-t^*_1)}  =   a e^{-bt^*_1} e^{\mu(\tilde{t}-t^*_1)} < a e^{\mu(\tau+\delta)},
\end{align*}
that is, $\|x(\tilde{t})\|<\alpha^{-1}_1\left(a e^{\mu(\tau+\delta)}\right)<R$ where we used inequality~\eqref{inequality2.1}. This is a contradiction to the definition of $\tilde{t}$. Therefore, $x(t)\in\mathcal{B}(R)$ for all $t\in[t^*_1,t_1+\tau)$.

Similarly to the discussions in the proof of Theorem~\ref{Th1}, we can prove $x((t_1+\tau)^-) +g(x(t_1))\in\mathcal{B}(R)$. Then, Assumption~\ref{assumption2} and~\eqref{inequality2.3} conclude that 
\begin{align}\label{afterimpulse2}
V(x(t_1+\tau)) & =   V\left(x((t_1+\tau)^-) +g(x(t_1))\right)\cr
               &\leq \rho V(x(t_1)) \cr
               &\leq \rho V(x(t^*_1)) e^{\mu(t_1-t^*_1)} \cr
               &\leq \rho a e^{-bt^*_1} e^{\mu\delta} \cr
               & <   \rho a e^{-b(t_1-\delta)} e^{\mu\delta} \cr
               & =   \rho e^{b(\tau+\delta)} e^{\mu\delta} a e^{-b(t_1+\tau)} \cr
               & <   a e^{-b(t_1+\tau)},
\end{align}
that is, $V(x(t_1+\tau))< a e^{-b(t_1+\tau)}$. Thus, condition~\eqref{inequality2.3} enforces $V$ at the impulse time $t_1+\tau$ to stay underneath the threshold (see Fig.~\ref{PMechanism} for an illustration). From the continuity of system~\eqref{sys} on $[t_1+\tau,t_2]$, we can see that there exists a $t^*_2\in (t_1+\tau,t_2]$ such that $V(x(t^*_2))=a e^{-b t^*_2}$ and $V(x(t))\geq a e^{-bt}$ for $t\in[t^*_2,t_2]$. Similarly to the discussion of~\eqref{overthreshold}, we get $V(x(t))\leq a e^{(\mu+b)\delta} e^{-bt}$ for $t\in [t_1+\tau,t^*_2]$. From the definition of $t_2$ in~\eqref{ETM2} and inequality~\eqref{afterimpulse2}, we have that
\begin{align*}
a e^{-bt_2} &\leq V(x(t_2)) \cr
            &\leq V(x(t_1+\tau)) e^{\mu(t_2-t_1-\tau)} \cr
            & <   \rho e^{b(\tau+\delta)} e^{\mu\delta} a e^{-b(t_1+\tau)} e^{\mu(t_2-t_1-\tau)},
\end{align*}
which implies
\[
t_2-t_1> \Gamma=\tau-\frac{\ln\left( \rho e^{b(\tau+\delta)} e^{\mu\delta} \right)}{b+\mu}.
\]

Repeating the above discussion over the interval $[t_{k-1}+\tau,t_{k}+\tau)$ for each $k\geq 2$, we can show that 
\begin{itemize}
\item there exists a $t^*_k\in(t_k-\delta,t_k]$ such that $t^*_k>t_{k-1}+\tau$, $V(x(t^*_k))=a e^{-bt^*_k}$, and $V(x(t))\geq a e^{-bt}$ for $t\in [t^*_k,t_k]$;

\item when $t\in[t_{k-1}+\tau,t^*_k)$, $x(t)\in \mathcal{B}(R)$ and $V(x(t))\leq a e^{(\mu+b)\delta} e^{-bt}$;

\item when $t\in[t^*_k,t_k+\tau)$, $x(t)\in \mathcal{B}(R)$ and
\begin{align}\label{summary1}
V(x(t)) &\leq V(x(t^*_k)) e^{\mu(t-t^*_k)} =   a e^{-bt}e^{(b+\mu)(t-t^*_k)} \cr
        & <   a e^{-bt}e^{(b+\mu)(\tau+\delta)};
\end{align}

\item for any $k\in\mathbb{N}$, we have $x((t_k+\tau)^-) +g(x(t_k))\in\mathcal{B}(R)$, then similarly to the discussion of~\eqref{afterimpulse2},
\begin{equation}\label{afterimpulse3}
V(x(t_k+\tau))  =   V\left(x((t_k+\tau)^-) +g(x(t_k))\right) < a e^{-b(t_k+\tau)};
\end{equation}

\item $t_{k+1}-t_k>\Gamma$.

\end{itemize}

%We can conclude from the above summary that, for any $t\geq t_1+\tau$, there exists some $k\in\mathbb{N}$ such that $t\in[t_k+\tau,t_{k+1}+\tau)$. Moreover, there is a $t^*_{k+1}\in(t_k+\tau,t_{k+1}]$ satisfying $V(x(t^*_{k+1})) = a e^{-b t^*_{k+1}}$ and $V(x(t))< a e^{-bt}$ for $t\in [t_k+\tau,t^*_{k+1})$. We can also derive from~\eqref{summary1} that $V(x(t))< a e^{-bt}e^{(b+\mu)(\tau+\delta)}$ for $t\in [t^*_{k+1},t_{k+1}+\tau)$. 

Therefore, from all the above discussions, we have 
\[ 
V(x(t))< a e^{-bt}e^{(b+\mu)(\tau+\delta)} \textrm{~for~} t\geq 0,
\] 
which with~(i) of Assumption~\ref{assumption2} implies attractivity of system~\eqref{sys}. Replacing $a$ by $\bar{a}$ and $\tau$ by $\tau +\delta$ in~\eqref{stability1.2},~\eqref{stability1.3},~\eqref{stability1.4} and the definition of $\sigma$ in the proof of Theorem~\ref{Th1}, stability of system~\eqref{sys} with trigger~\eqref{ETM2} can be established identically, and thus is omitted.
\end{proof}

\begin{remark}\label{Remark2}
Other than $\Gamma$, we can tell from the event-triggering mechanism that $\delta$ is also a lower bound of the inter-impulse times $\{t_{k+1}+\tau-(t_k+\tau)\}_{k\in\mathbb{N}}$ from the mechanism of trigger~\eqref{ETM2}, that is, $t_{k+1}-t_k\geq \delta$ for $k\in\mathbb{N}$. Therefore, we conclude 
\begin{align*}%\label{bdd}
t_{k+1}-t_k\left\{\begin{array}{ll}
> \Gamma, &\mathrm{~if~} \Gamma \geq \delta\cr
\geq \delta, &\mathrm{~if~} \delta  >   \Gamma
\end{array}\right.
\end{align*}
\end{remark}

\begin{remark}\label{Remark3}
It is worth noting that the state sampling times are determined by the sampling period $\delta$ and independent of the event times defined by~\eqref{ETM2}, and only the states sampled after the time $t_k+\tau$ are utilized to determine the event time $t_{k+1}$. Therefore, to improve the sampling efficiency, we propose the following event trigger
\begin{align}\label{ETM3}
t_{k+1}=\left\{\begin{array}{ll}
\inf\{ j\delta\geq 0 : V(x(j\delta)) \geq a e^{-bj\delta} \}, &\mathrm{~if~} k=0\cr
\inf\{ t=t_k+\tau+j\delta : V(x(t)) \geq a e^{-bt} \}, &\mathrm{~if~} k\geq 1
\end{array}\right.
\end{align}
where $j\in\mathbb{N}$, $\delta>0$ is the sampling period, and $a,b$ are positive constants. It can be observed from~\eqref{ETM3} that the sampling process starts at each impulse time $t_k+\tau$ and ends when the next event time $t_{k+1}$ is determined, and this sampling process repeats at every impulse time. Compared with~\eqref{ETM2}, the event trigger~\eqref{ETM3} is more efficient in the sense that the system states are measured periodically in the time interval $[t_k+\tau,t_{k+1}]$ for $k\in\mathbb{N}$ instead of the entire time span. Theorem~\ref{Th2} with its proof still holds for system~\eqref{sys} with event times determined by~\eqref{ETM3}. Other than the lower bound $\Gamma$ derived in Theorem~\ref{Th2}, we can get another lower bound of the inter-event times that is $\tau+\delta$, from the mechanism of the trigger~\eqref{ETM3}. Hence, we summarize the lower bound of the inter-event times as follows:
\begin{align*}%\label{bdd}
t_{k+1}-t_k\left\{\begin{array}{ll}
> \Gamma, &\mathrm{~if~} \Gamma \geq \tau+\delta\cr
\geq \tau+\delta, &\mathrm{~if~} \tau+\delta  >   \Gamma
\end{array}\right.
\end{align*}
\end{remark}

\section{An Illustrative Example}\label{Sec5}
In this section, we consider the following impulsive control system to illustrate our theoretical results:
\begin{eqnarray}\label{example-sys}
\left\{\begin{array}{ll}
\dot{x}(t)=A x(t) + B h(x(t)), ~~t\not=t_k+\tau,\cr
\Delta x(t_k+\tau)=C(t_k) x(t_k),~~k\in\mathbb{N},\cr
x(0)=x_0,
\end{array}\right.
\end{eqnarray}
where $x=(x_1,x_2,x_3)^{\textrm{T}}\in\mathbb{R}^3$, $h(x)=(q(x_1),q(x_2),q(x_3))^\textrm{T}$ with function $q$ defined as $q(z)=\frac{1}{2}\left( |z+1|-|z-1| \right)$ for $z\in\mathbb{R}$, and the matrices 
\[
A=\begin{bmatrix}
   -1 & 0 & 0 \\
    0 & -1 & 0 \\
    0 & 0 & -1 \\
\end{bmatrix},~ B=\begin{bmatrix}
    1.25 & -3.2 & -3.2 \\
    -3.2 & ~1.1 & -4.4 \\
    -3.2 & ~4.4 & ~1.0
\end{bmatrix}
\]
and
\[
C(t)=\begin{bmatrix}
   -1 & 0 & -\frac{2}{5}\cos(\pi t) \\
    0 & -1-\frac{2}{5}\sin(\pi t) & 0 \\
    -\frac{2}{5}\sin(2t) & 0 & -1 \\
\end{bmatrix}
\]
for $t\in\mathbb{R}$. The actuation delay is $\tau\geq 0$, and the event times $\{t_k\}_{k\in\mathbb{N}}$ are to be determined according to the event triggers~\eqref{ETM1},~\eqref{ETM2},~and~\eqref{ETM3}, respectively.
It has been shown in~\cite{WH-BZ-QLH-FQ-JK-JC:2017} that system~\eqref{example-sys} with $C\equiv 0$ exhibits chaotic behavior.

From the dynamics of system~\eqref{example-sys}, we have
\[
\|Ax+Bh(x)\|\leq \|Ax\| +\|B\| \ \|h(x)\| \leq (\|A\|+\|B\|)\|x\|
\]
and
\[
\|x+C(t)x\|\leq \|I+C(t)\| \ \|x\| \leq \frac{2}{5}\|x\| \textrm{~for all~} k\in\mathbb{N} \textrm{~and~} t\in\mathbb{R},
\]
thus Assumption~\ref{assumption1} holds for system~\eqref{example-sys} with $L_1=\|A\|+\|B\|\approx 8.010$ and $L_2=2/5$.

Regarding Assumption~\ref{assumption2}, we select Lyapunov candidate $V(x(t))=\|x(t)\|$. Then, (i) of Assumption~\ref{assumption2} holds with $\alpha_1(\|x\|)=\alpha_2(\|x\|)=\|x\|$. When $t$ is not an impulse time and $\|x(t)\|\not=0$, we get from the continuous dynamics of system~\eqref{example-sys} that
\begin{align}\label{dV}
\frac{\textrm{d}}{\textrm{d}t} \|x(t)\| = \frac{\textrm{d}}{\textrm{d}t} \sqrt{x^{\textrm{T}}x} = \frac{x^{\textrm{T}}\dot{x}}{\|x\|} \leq L_1 \|x\|.
\end{align}
If $\|x(t)\|=0$ at some $t$, then $x$ stays at $0$ for any time after time $t$, which means $\frac{\textrm{d}}{\textrm{d}t} \|x(t)\| \leq L_1 \|x\|$ also holds for this scenario. Hence,~(ii) of Assumption~\ref{assumption2} is satisfied with $\mu=L_1$. From~\eqref{y+g(x)}, we have
\[
\|x((t_k+\tau)^-) + g(x(t_k))\|\leq \rho \|x(t_k)\|
\]
with 
\[
\rho=L_2+  \sqrt{\frac{\tau L_1 (e^{2L_1\tau}-1)}{2}},
\] 
which implies~(iii) of Assumption~\ref{assumption2} holds. Therefore, Assumption~\ref{assumption2} is true for system~\eqref{example-sys} with $V(x)=\|x\|$. It can be observed that both Assumptions~\ref{assumption1} and~\ref{assumption2} hold globally, that is, $R=\infty$ under these assumptions.

If
\begin{equation}\label{example-condtion1}
 \left( L_2+  \sqrt{\frac{\tau L_1 (e^{2L_1\tau}-1)}{2}} \right) e^{b\tau}<1
\end{equation}
then all the conditions of Theorem~\ref{Th1} are satisfied. In the simulation, we let $a=0.294$, $b=0.1$, and $\tau=0.05$, so that~\eqref{example-condtion1} holds. Then, we can compute $\rho\approx 0.896$ and then $\Gamma\approx 0.063$, that is, $t_{k+1}-t_k \geq 0.063$ for all $k\in\mathbb{N}$. See Fig.~\ref{CED} for the simulation of system~\eqref{example-sys} with event times defined by~\eqref{ETM1}.

\begin{figure}[!t]
\centering
\includegraphics[width=2.8in]{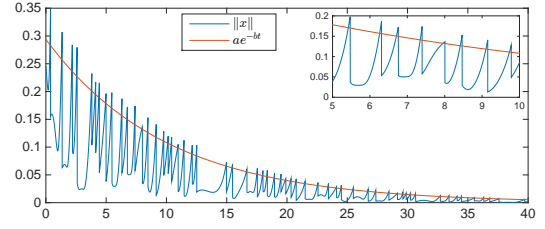}
\caption{Evolution of $\|x\|$ for system~\eqref{example-sys} with $x_0=(0.1, 0.2, -0.1)^{\textrm{T}}$ under continuous event detection.}
\label{CED}
\end{figure}

If
\begin{equation}\label{example-condtion2}
 \left( L_2+  \sqrt{\frac{\tau L_1 (e^{2L_1\tau}-1)}{2}} \right) e^{b(\tau+\delta)} e^{L_1 \tau}<1
\end{equation}
then all the conditions of Theorem~\ref{Th2} are satisfied. In the simulation, we let $a=0.294$, $b=0.1$, $\tau=0.04$, and $\delta=0.015$, so that~\eqref{example-condtion2} holds. We can calculate $\rho\approx 0.779$ and then $\Gamma\approx 0.055\geq \tau+\delta$. Therefore, we get from Theorem~\ref{Th2} and Remark~\ref{Remark2} that the inter-event times are bigger than $0.055$. See Fig.~\ref{PED} for the simulations of system~\eqref{example-sys} with two types of periodic event detection.

\begin{figure}[!t]
\centering
\subfigure[Simulation with event trigger~\eqref{ETM2}.]{\label{PEDa}
\includegraphics[width=2.8in]{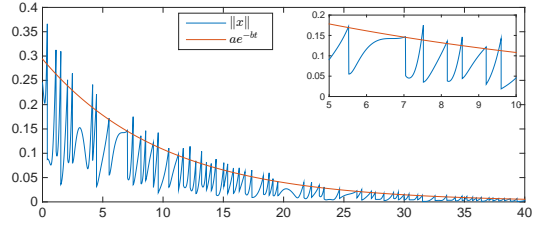}}
\subfigure[Simulation with event trigger~\eqref{ETM3}.]{\label{PEDb}
\includegraphics[width=2.8in]{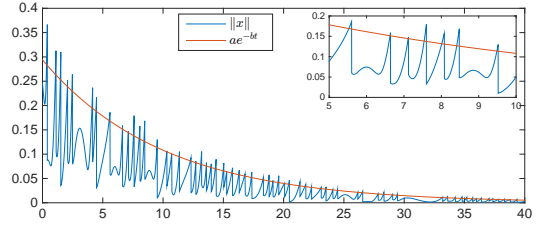}}
\caption{Evolution of $\|x\|$ for system~\eqref{example-sys} with $x_0=(0.1, 0.2, -0.1)^{\textrm{T}}$ under periodic event detection.}
\label{PED}
\end{figure}

It can be seen from Figs.~\ref{CED} and~\ref{PED} that system~\eqref{sys} is asymptotically stable. Nevertheless, due to the existence of the actuation delay $\tau$ or (and) the sampling period $\delta$, the state norm $\|x\|$ does not always stay below the threshold line. It can also be observed that the inter-impulse periods are lower bounded, and the pulse phenomenon is avoided. The vertical line segments correspond to the impulse times and the impulses in our simulations.

To compare with the time-triggered impulsive control, we apply a recent result in~\cite{KZ-EB:2021} to system~\eqref{example-sys}. Let $s_k=t_k+\tau$ for $k\in\mathbb{N}$, then we can rewrite~\eqref{example-sys} in the form of a system with delayed impulses
\begin{eqnarray}\label{example-sys1}
\left\{\begin{array}{ll}
\dot{x}(t)=A x(t) + B h(x(t)), ~~t\not=s_k,\cr
\Delta x(s_k)=C(s_k-\tau) x(s_k-\tau),~~k\in\mathbb{N},\cr
x(0)=x_0,
\end{array}\right.
\end{eqnarray}
where we assume $s_1-\tau\geq 0$ so that the system can be driven by the initial condition in~\eqref{example-sys1}. 

With the Lyapunov function $V(x)=\|x\|$, we conclude from~\eqref{dV} and~\eqref{y+g(x)} that
\[
\dot{V}(x)\leq L_1 V(x) \textrm{~and~} V(x(s_k))\leq \rho V(x(s_k-\tau)).
\]
Let $N(t,s)$ represent the number of impulses in the half-closed interval $(s,t]$. Then, we get from the main result of~\cite{KZ-EB:2021} that if there exist positive constants $\lambda$, $\zeta$ and $\sigma$ such that $\rho e^{\sigma N(s_k,s_k-\tau)}\leq 1$ for all $k\in\mathbb{N}$ and 
\begin{equation}\label{ADT}
N(t,s)\geq \frac{t-s}{T^*}-\frac{\zeta}{\sigma} \textrm{~for~} t>s\geq 0,
\end{equation}
where $T^*=\sigma/(L_1+\lambda)$, then system~\eqref{example-sys1} is asymptotically stable. Inequality~\eqref{ADT} is called a reverse average dwell-time (ADT) condition which requires that there exists at least one impulse per interval of length $T^*$ on average (see~\cite{KZ-EB:2021} for the detailed discussions). We can tell from~\cite{KZ-EB:2021} that $\lambda$ corresponds to the exponential convergence rate of $V$. Therefore, we consider $\lambda=b=0.1$, so that both the event-triggered impulsive control and time-triggered impulsive control share the same convergence rate for $V$. Here, we also consider the same delay $\tau=0.05$, so that $\rho\approx 0.896$ is the same as that in the simulation of Fig.~\ref{CED}. The largest possible value of $\sigma=\ln(1/\rho)\approx 0.110$ is attained when $N(s_k,s_k-\tau)=1$ for all $k\in\mathbb{N}$, that is, there are no impulses in each open interval $(s_k-\tau,s_k)$. Hence, the largest possible $T^*\approx 0.014$ could be obtained, which implies that an interval of length $0.014$ includes at least one impulse on average. Nevertheless, we have shown that the event times determined by the trigger~\eqref{ETM1} are lower bounded by $\Gamma\approx 0.063>T^*$, which means any two consecutive events (or impulses) are separated by at least 0.063 units of time. Hence, the impulses are triggered much less frequently by our event trigger~\eqref{ETM1}, when compared with the time-triggered impulsive control method in~\cite{KZ-EB:2021}. {For time-triggered impulsive control, the ADT conditions allow flexibility on the choice of impulse times. However, such conditions require that the impulse times should be pre-scheduled to ensure the average dwell time, and not all the impulses are necessary in order to preserve the desired performance of the control system. The proposed event-triggered impulsive control method only activates the impulses when it is needed to prevent the system from violating the desired performance. Hence, fewer impulses
are triggered in this example.}

\section{Conclusions}\label{Sec6}
This study investigated the event-triggered impulsive control problem of nonlinear systems. Actuation delays were considered with impulsive controllers. We proposed two types of event triggers depending on continuous and periodic event detection, respectively. Upper bounds of the actuation delays (and the sampling period) and sufficient conditions on the impulsive control inputs were derived to ensure asymptotic stability of the impulsive control systems. {A possible direction to improve the main results is to extend Assumption~\ref{assumption2} on linear rates $\rho$ and $\mu$ for the Lyaunov function to nonlinear rates, so that the proposed method can be applied to a wider class of nonlinear impulsive systems}, while a promising research direction is to generalize the designed impulsive control methods to stabilize time-delay systems. {Excluding Zeno behavior is one of the main objectives in the design of event-triggered control algorithms for the sake of practical implementations. Another possible research direction is to extend the solution beyond Zeno time for time-delay hybrid systems by following the line of work in \cite{SD-PF:2018}.}
%\vskip1cm

\section*{Acknowledgments}
The authors would like to thank the associate editor and the anonymous reviewers for constructive comments and suggestions, which helped improve the quality of this paper.

\ifCLASSOPTIONcaptionsoff
  \newpage
\fi

% biography section
% 
% If you have an EPS/PDF photo (graphicx package needed) extra braces are
% needed around the contents of the optional argument to biography to prevent
% the LaTeX parser from getting confused when it sees the complicated
% \includegraphics command within an optional argument. (You could create
% your own custom macro containing the \includegraphics command to make things
% simpler here.)
%\begin{IEEEbiography}[{\includegraphics[width=1in,height=1.25in,clip,keepaspectratio]{mshell}}]{Michael Shell}
% or if you just want to reserve a space for a photo:
%
%\begin{IEEEbiography}{Michael Shell}
%Biography text here.
%\end{IEEEbiography}
%
%% if you will not have a photo at all:
%\begin{IEEEbiographynophoto}{John Doe}
%Biography text here.
%\end{IEEEbiographynophoto}
%
%% insert where needed to balance the two columns on the last page with
%% biographies
%%\newpage
%
%\begin{IEEEbiographynophoto}{Jane Doe}
%Biography text here.
%\end{IEEEbiographynophoto}

% You can push biographies down or up by placing
% a \vfill before or after them. The appropriate
% use of \vfill depends on what kind of text is
% on the last page and whether or not the columns
% are being equalized.

%\vfill

% Can be used to pull up biographies so that the bottom of the last one
% is flush with the other column.
%\enlargethispage{-5in}

% that's all folks
\end{document}